\def\datum{ }
\theoremstyle{plain}
\newtheorem{thm}{Theorem}
\newtheorem{lem}[thm]{Lemma}
\newtheorem{cor}[thm]{Corollary}
\newtheorem{prop}[thm]{Proposition}
\newtheorem{ex}[thm]{Example}
\theoremstyle{definition}
\newtheorem{rmk}[thm]{Remark}
\numberwithin{thm}{section}
\def\mod{{\rm mod}}
\def\comod{{\rm comod}}
\date{\datum}
\author{Ph\`ung H\^o Hai}\title{On an injectivity lemma in the proof of Tannakian duality}
\email{phung@math.ac.vn}
\address{Institute of Mathematics, Vietnam Academy of Science and Technology, 18 Hoang Quoc Viet, Cau Giay, Hanoi, Vietnam}
\thanks{This research is funded by Vietnam National Foundation for  Science and Technology Development(NAFOSTED) under grant number 101.01-2011.34. Part of this work has been carried out when the author was visiting the Vietnam Institute for Advanced Study in Mathematics.} 
\subjclass[2010]{16T15, 18A22}
\date{May 12, 2015}
\begin{document}
\begin{abstract}
In this short work we give a very short and 
elementary proof of the injectivity lemma, 
which plays an important role in the Tannakian duality for Hopf 
algebras over a field. Based on this we provide some generalizations of this fact to the case of flat algebras over a noetherian domain. 
\end{abstract} 
\maketitle  
\parskip10pt
\section*{Introduction}
Let $k$ be a field.
The neutral Tannakian duality establishes a 
dictionary between $k$-linear tensor abelian 
categories, equipped with a fiber functor to the 
category of $k$-vector spaces, and affine group 
schemes over $k$. The duality was first obtained 
by Saavedra in \cite{SR72},
among other important results. In \cite{DM}, 
Deligne and Milne gave a very readable 
self-contained account on the result. 

The main part of the proof of Tannakian duality 
was to establish the duality between abelian 
category equipped with fiber functors to ${\rm 
vect}_k$ and $k$-coalgebras. Here, one first 
proves the claim for those categories which have 
a (pseudo-) generator. Such categories are in correspondence to finite 
dimensional coalgebras. 
The injectivity lemma 
claims that, under this correspondence, fully 
faithful exact functors, which preserves 
subobjects, correspond to injective 
homomorphisms of coalgebras (see Lemma 
\ref{lem2} for the precise formulation). This lemma was implicitly used in the proof of Prop.~2.21 in \cite{DM}. 
In the original work of Saavedra  this claim was 
obtained as a corollary of the duality, cf. 
\cite[2.6.3~(f)]{SR72}. In his recent book 
Szamuely gave a more direct proof of the 
injectivity lemma, cf. \cite[Prop.~6.4.4]{Sza}. 
Szamuely's proof is nice but still quite 
involved. Similar treatment and some generalizations was also made in Hashimoto's book \cite[Lem.~3.6.10]{Ha}.

In this short work we give a very short and 
elementary proof of the injectivity lemma. Based on this, we provide some 
generalizations of this fact to the case of flat 
coalgebras over a noetherian domain. We believe that these results
will find applications in the Tannakian duality for Hopf algebras
over noetherian domains.

\subsection*{Notations}Throughout this work we shall fix a commutative noetherian domain $R$. The quotient field of $R$ will be denoted by $K$. 

For an algebra (or more general a ring) $A$ (commutative or not), ${\rm mod}(A)$ denotes the category of left $A$-modules and $\mod_{\rm f}(A)$ denotes the subcategory finitely generated modules.

For a coalgebra $C$ over  $R$, ${\rm comod}(C)$ denotes the category of right $C$-comodules, and $\comod_{\rm f}(C)$ denotes the subcategory of comodules which are {\em finite over $R$}.

\section{A simple proof of the injectivity lemma} \label{sect1}
Let $k$ be a field and $f:A\to B$ be a 
homomorphism of finite dimensional $k$-algebras. 
Then $f$ induces a functor $\omega:
\mod_{\rm f}(B)\to\mod_{\rm f}(A)$ between the categories of 
finite modules over $A$ and $B$, which is 
identity on the underlying vector spaces. In 
particular, $\omega$ is a faithfully exact functor.

\begin{lem}\label{lem1} A homomorphism $f$ 
given as above is surjective if and only if the 
induced functor $\omega$ is full and has the 
property: for any $B$-module $X$ and any $A$-submodule $Y$ of $\omega(X)$, there exists a 
$B$-submodule $X'$ of $X$ such that 
$\omega(X')=Y$. In other words $f$ is surjective 
if and only if $\mod_{\rm f}(B)$, by means of $f$, is a 
full (abelian) subcategory of $\mod_{\rm f}(A)$, closed under 
taking submodules.\end{lem}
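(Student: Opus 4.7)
The plan is to prove the biconditional by two direct arguments; one direction is essentially automatic from surjectivity, and the other reduces to applying the subobject-lifting hypothesis to a single well-chosen test module.

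\emph{Forward direction.} Assume $f$ is surjective. For fullness, I would take $B$-modules $M,N$ and an $A$-linear map $\varphi\colon \omega(M)\to\omega(N)$; writing an arbitrary $b\in B$ as $b=f(a)$, the computation $\varphi(bm)=\varphi(f(a)m)=f(a)\varphi(m)=b\varphi(m)$ shows $\varphi$ is already $B$-linear. For the subobject property, an $A$-submodule $Y\subseteq \omega(X)$ satisfies $f(A)\cdot Y\subseteq Y$, and surjectivity upgrades this to $B\cdot Y\subseteq Y$, so $Y$ is itself a $B$-submodule and one takes $X'=Y$.

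\emph{Reverse direction.} The idea is to feed the hypothesis a single test module: take $X=B$, regarded as a $B$-module by left multiplication, so that $\omega(X)=B$ with $A$-action $a\cdot b = f(a)b$. The subset $f(A)\subseteq B$ is then an $A$-submodule of $\omega(X)$ because $f$ is a ring homomorphism. By hypothesis there exists a $B$-submodule $X'\subseteq B$ with $\omega(X')=f(A)$, which is to say $f(A)$ itself is a left ideal of $B$. Since $f(1_A)=1_B\in f(A)$, this forces $B=B\cdot 1_B\subseteq f(A)$, giving surjectivity.

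There is no real obstacle here; the only nontrivial ingredient is the clever choice of test module $X=B$ in the reverse direction. It is worth noting that the fullness hypothesis is actually unused in the reverse implication—only the submodule-lifting property is invoked—which I expect to parallel the situation when the argument is transported to the coalgebra setting of Lemma \ref{lem2}.
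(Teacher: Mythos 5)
Your proof is correct and uses exactly the paper's argument: the forward direction is immediate from surjectivity, and the reverse direction applies the submodule-lifting hypothesis to the test module $X=B$ with $Y=f(A)$, concluding from $1_B\in f(A)$ that $f(A)=B$ (the paper phrases this as a contradiction with $\mathrm{im}(f)$ being a proper subalgebra, but it is the same computation). Your side remark that fullness is not needed in the reverse implication is also consistent with the paper's proof, which likewise never invokes it.
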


\begin{proof}

If $f$ is surjective then obviously $\omega$ has 
the claimed properties. We prove the converse 
statement. Thus for any $B$-module $X$ and 
any submodule $Y$ of $X$, considered as 
modules over $A$, we know that $Y$ is also 
stable under the action of $B$ (obtained by 
restricting the action of $B$ on $X$). Assume the 
contrary that $f$ is not surjective, i.e.,
$B_0:={\rm im} (f)$ is a strict subalgebra of 
$B$. Then $B_0\subset B$ is an inclusion of $A$-modules, $B$ it self is a $B$-module, but $B_0$ is not stable under the action of $B$ as it contains the unit of $B$. A contradiction.
\end{proof}

By duality we have the following result for 
comodules.

\begin{lem}\label{lem2}
Let $f:C\to D$ be a homomorphism of finite 
dimensional $k$-coalgebras. Then the category 
$\comod_{\rm f}(C)$, considered by means of $f$ as a 
subcategory of $\comod_{\rm f}(D)$, is full and closed 
under taking subobjects if and only if $f$ is 
injective.
\end{lem}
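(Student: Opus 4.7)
The plan is to deduce Lemma~\ref{lem2} from Lemma~\ref{lem1} by $k$-linear duality. For a finite-dimensional $k$-coalgebra $C$ the dual space $C^*$ carries a natural $k$-algebra structure, and sending $(V,\rho)$ to $V$ with the left $C^*$-action $c^*\cdot v=\sum c^*(v_1)v_0$ (writing $\rho(v)=\sum v_0\otimes v_1$) defines an equivalence of $k$-linear abelian categories $\comod_{\rm f}(C)\simeq \mod_{\rm f}(C^*)$. Under this equivalence, subobjects of comodules correspond to subobjects of modules.

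Next, I would observe that the dual of the coalgebra homomorphism $f:C\to D$ is an algebra homomorphism $f^*:D^*\to C^*$, and that the functor $\omega_f:\comod_{\rm f}(C)\to\comod_{\rm f}(D)$ induced by $f$ (obtained by composing the coaction with $\mathrm{id}\otimes f$) corresponds, under the equivalence above, to the restriction-of-scalars functor $\omega_{f^*}:\mod_{\rm f}(C^*)\to\mod_{\rm f}(D^*)$. This is a direct unwinding: if $\rho(v)=\sum v_0\otimes v_1$ then as a $D$-comodule the coaction sends $v$ to $\sum v_0\otimes f(v_1)$, so $d^*\cdot v=\sum d^*(f(v_1))v_0=\sum f^*(d^*)(v_1)v_0=f^*(d^*)\cdot v$, which is exactly restriction along $f^*$.

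Finally, over a field, finite-dimensional duality gives that $f$ is injective if and only if $f^*$ is surjective. Since the equivalence $\comod_{\rm f}(C)\simeq\mod_{\rm f}(C^*)$ is an equivalence of abelian categories, the conditions that $\omega_f$ be full and closed under taking subobjects translate verbatim to the same conditions on $\omega_{f^*}$. Applying Lemma~\ref{lem1} to $f^*:D^*\to C^*$ then yields the conclusion of Lemma~\ref{lem2}.

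I do not expect a genuine obstacle: the content is entirely contained in Lemma~\ref{lem1}, and the only verification needed is the compatibility identifying $\omega_f$ with $\omega_{f^*}$ under the module/comodule dictionary, which is routine. The proof is therefore essentially a one-line reduction.
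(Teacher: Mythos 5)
Your proof is correct and is essentially the paper's own argument: the paper derives Lemma \ref{lem2} from Lemma \ref{lem1} precisely "by duality," i.e., via the equivalence $\comod_{\rm f}(C)\simeq\mod_{\rm f}(C^*)$ and the fact that $f$ is injective iff $f^*$ is surjective. You have merely spelled out the routine compatibility of $\omega_f$ with restriction along $f^*$, which the paper leaves implicit.
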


\begin{rmk}\label{rmk3}

In the proof of Lemma \ref{lem1}, there is no need to 
assume that $A$ is finite dimensional. Therefore, 
in Lemma \ref{lem2} there is no need to assume that 
$D$ is finite dimensional. On the other hand, it is 
known that each coalgebra is the union of its 
finite dimensional subcoalgebras. Therefore there is no need to impose the dimension condition on 
$C$ either.

\end{rmk}

\section{Generalizations}

We give here several generalizations of the 
lemmas in Section \ref{sect1} to the case when 
$k$ is a noetherian domain.

  \subsection{The case of algebras}     
Let $R$ be a noetherian domain. We consider $R$-algebras. Modules over such an algebra are automatically $R$-modules, we call such a module $R$-finite (resp. torsion-free, flat, projective, free) if it is finite (resp. torsion-free, flat, projective, free) over $R$. Let $R\to S$ be a homomorphism of commutative algebras. Then the base change $R\to S$ will be denoted by the subscript $( )_S$. For instance $M_S:=M\otimes_RS$, $f_S:=f\otimes_RS$ for an $R$-linear map $f$.

Let  $f:A\to B$ be a 
homomorphism of finite, torsion-free $R$-algebras. It 
induces a functor $\omega:\mod_\text{f}(B)\to 
\mod_\text{f}(A)$, which is 
identity functor on the underlying $R$-modules, 
therefore it is faithful and exact. The following lemma is a straightforward generalization of Lemma \ref{lem1}.
\begin{lem}\label{lem4} The map $f$ as above is surjective if and only if 
$\mod_{\rm f}(B)$ when considered by means of $f$
  as a subcategory of $\mod_{\rm f}(A)$ is full and closed 
under taking subobjects.
\end{lem}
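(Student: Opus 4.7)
My plan is to import the proof of Lemma \ref{lem1} almost verbatim, with only one small bookkeeping remark about the base ring $R$. The forward implication is immediate: if $f$ is surjective, then every $b\in B$ has the form $f(a)$, so any $A$-linear map between $B$-modules is automatically $B$-linear, and any $A$-submodule of a $B$-module is stable under $B$. This shows that $\omega$ exhibits $\mod_{\rm f}(B)$ as a full subcategory of $\mod_{\rm f}(A)$ closed under subobjects.

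For the converse I would argue by contradiction, exactly as in Lemma \ref{lem1}. Assume the categorical hypothesis but suppose $f$ is not surjective, and put $B_0 := {\rm im}(f) \subsetneq B$. View $B$ as a module over itself; since $B$ is $R$-finite by assumption, $B\in \mod_{\rm f}(B)$. Then $B_0$ is an $A$-submodule of $\omega(B)$, and because $R$ is noetherian, the $R$-submodule $B_0\subseteq B$ is again $R$-finite, so $B_0\in \mod_{\rm f}(A)$. By the hypothesis there must exist a $B$-submodule $X'\subseteq B$ with $\omega(X') = B_0$, i.e.\ $X' = B_0$ as underlying sets. But $B_0\ni f(1_A) = 1_B$, so $X'\supseteq B\cdot 1_B = B$, forcing $B_0 = B$, a contradiction.

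I do not anticipate a genuine technical obstacle here: the argument is a direct transcription of the proof of Lemma \ref{lem1}. The only new ingredient is the remark that noetherianness of $R$ keeps $B_0$ inside $\mod_{\rm f}(A)$, which is essentially automatic. The torsion-free hypothesis on $A$ and $B$ plays no role in this particular contradiction; it enters only in the preceding sentences, to guarantee that $\omega$ is faithful and exact between the two categories of $R$-finite modules.
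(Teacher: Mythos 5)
Your proof is correct, but it takes a different route from the one the paper actually presents. You transcribe the unit-element argument of Lemma \ref{lem1} directly: $B_0={\rm im}(f)$ is an $R$-finite $A$-submodule of the $B$-module $B$, so by hypothesis it must be $B$-stable, yet it contains $1_B$, forcing $B_0=B$. This is sound, and your bookkeeping remark (noetherianness of $R$ keeps $B_0$ in $\mod_{\rm f}(A)$) is exactly the point that needs checking; in fact $B_0=f(A)$ is also a quotient of the $R$-finite module $A$, so finiteness is doubly assured. The paper instead reduces modulo each maximal ideal $\mathfrak m$: it observes that the subcategory of $\mod_{\rm f}(A)$ killed by $\mathfrak m$ is equivalent to $\mod_{\rm f}(A_{k(\mathfrak m)})$ and closed under subobjects, applies Lemma \ref{lem1} over the residue field to get surjectivity of $f_{k(\mathfrak m)}$ for every $\mathfrak m$, and concludes $B/f(A)=0$ by the local--global vanishing criterion \cite[Thm~4.8]{Mats}. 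The author explicitly notes that your direct argument works and that the base-change proof is chosen only because its mechanism is reused in Proposition \ref{prop5}(2), where no single ``unit element'' witness is available and one genuinely needs to pass to the closed fibers. So your version is the shorter and more elementary one for this particular lemma; the paper's version is the one that generalizes. Your closing observation that torsion-freeness is irrelevant to the contradiction itself is also accurate.
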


\begin{proof} Only the ``if'' claim needs 
verification. Let $\mathfrak m\in R$ be a 
maximal ideal and let $k(\mathfrak 
m):=R/\mathfrak m$ be the residue field. Then 
the full subcategory of $\mod_{\rm f}(A)$ annihilated by 
$\mathfrak m$ is equivalent to 
$\mod_{\rm f}(A_{k(\mathfrak m)})$. Note that this subcategory is also closed under taking subobjects.

Thus, by assumption, 
$\mod_{\rm f}(B_{k(\mathfrak m)})$ is a full 
subcategory of $\mod_{\rm f}(A_{k(\mathfrak m)})$,
closed under taking subobjects. Therefore the 
map $f_{k(\mathfrak m)} :A_{k(\mathfrak m)}\to 
B_{k(\mathfrak m)}$ is surjective, by means of 
Lemma \ref{lem1}. This holds for any maximal ideal 
$\mathfrak m$ of $R$, hence
$(B/f(A))_{k(\mathfrak m)}=0$ for all 
maximal ideals $\mathfrak m$. According to 
\cite[Thm~4.8]{Mats}, we conclude that 
$B/f(A)$ itself is zero.
\end{proof}

Lemma \ref{lem4} can also be proved in the same way as Lemma \ref{lem1}.
We prefer to present the above proof as its main idea (using base change) will be exploited further. 
 
An $A$-submodule $N$ of $M$ is said to be 
{\em saturated} iff $M/N$ is $R$-torsion-free.
A homomorphism $f:A\to B$ is said to be
{\em dominant} if 
$f_K :A_K \to B_K$ is surjective, or 
equivalently $B/f(A)$ is $R$-torsion. Here $K$ denotes the quotient field of $R$.

\begin{prop}\label{prop5}
Let $f:A\to B$ be a homomorphism of $R$-torsion 
free algebras and assume that $B$ is 
$R$-finite. Let $\omega:\mod_{\rm f}(B)\to\mod_{\rm f}(A)$ be the induced functor. Then:
\begin{enumerate}\item The image of 
$\omega$ is closed under taking saturated 
subobjects of $R$-torsion-free objects iff $f$ 
is dominant. In this case $\omega$ is also closed under taking saturated submodules of any modules and its restriction to the subcategory of $R$-torsion-free modules is full.
\item The image of $\omega$ is closed 
under taking subobjects of $R$-torsion-free 
objects iff $f$ is surjective. (In this case 
$\omega$ is also obviously full.)
\end{enumerate}

\end{prop}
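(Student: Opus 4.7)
The plan is to deduce both parts from the field case (Lemma~\ref{lem1}) by base change to the quotient field $K$, combined with the standard graph trick for fullness. The central technical fact is that when $M$ is an $R$-torsion-free $B$-module and $N\subseteq M$ is a \emph{saturated} $A$-submodule, the inclusions $M\hookrightarrow M_K$ and $N_K\hookrightarrow M_K$ are both injective and one has the identification $N = M\cap N_K$ inside $M_K$; this is exactly the mechanism that lets us transfer a $B_K$-structure on $N_K$ back to a $B$-structure on $N$.

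For the ``if'' direction of (1), assume $f$ is dominant and let $N$ be a saturated $A$-submodule of a torsion-free finite $B$-module $M$. Tensoring $0\to N\to M\to M/N\to 0$ with $K$ remains exact (the outer terms are torsion-free), so $N_K$ embeds in $M_K$ as an $A_K$-submodule. Since $f_K$ is surjective, Lemma~\ref{lem1} applied over $K$ shows that $N_K$ is a $B_K$-submodule of $M_K$, and the identification $N = M\cap N_K$ then forces $N$ to be $B$-stable. For the ``only if'' direction, take $M=B$ itself and let $N\subseteq B$ be the $R$-saturation of $f(A)$: by construction $N$ is a saturated $A$-submodule of the torsion-free $B$-module $B$, so by hypothesis it is $B$-stable, and since $1_B\in f(A)\subseteq N$ this forces $N=B$, i.e.\ $B/f(A)$ is $R$-torsion.

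The additional assertions in (1) reduce to the main case. Given a saturated $A$-submodule $N$ of an arbitrary finite $B$-module $M$, the $R$-torsion submodule $T(M)$ is automatically $B$-stable and must lie inside $N$ (otherwise $M/N$ would have torsion), so one applies the torsion-free case to $N/T(M)\subseteq M/T(M)$. For fullness on torsion-free objects, an $A$-linear map $g:\omega(M)\to\omega(M')$ between torsion-free $B$-modules has a graph $\Gamma(g)\subseteq M\oplus M'$ whose quotient is isomorphic to $M'$ and hence torsion-free; so $\Gamma(g)$ is a saturated $A$-submodule of a torsion-free $B$-module and, by the previous paragraph, is $B$-stable, which is precisely the $B$-linearity of $g$.

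Part (2) is analogous but simpler. The ``if'' direction is immediate from Lemma~\ref{lem4}. For ``only if'', apply the hypothesis to the $A$-submodule $f(A)$ of the $R$-torsion-free $B$-module $B$: it must be a $B$-submodule, and containing $1_B$ it equals $B$. The only delicate step I foresee is verifying $N=M\cap N_K$ in part (1); this is exactly where the saturation hypothesis on $M/N$ is essential, and its failure for non-saturated submodules explains precisely why dominance alone is not enough to close the image of $\omega$ under arbitrary subobjects, giving the dichotomy between (1) and (2).
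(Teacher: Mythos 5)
Your proof is correct, and in several places it is more direct than the paper's own argument. The ``if'' directions agree with the paper: base change to $K$, the identity $N=M\cap N_K$ for a saturated submodule of a torsion-free module, and the easy direction of Lemma~\ref{lem4} for part~(2). You diverge in the ``only if'' directions and in the supplementary claims of~(1). For ``only if'' in~(1) the paper verifies the full hypothesis of Lemma~\ref{lem1} for $\omega_K$: given an arbitrary finite $B_K$-module $X$ and an $A_K$-submodule $Y$, it chooses a $B$-lattice $M\subset X$ adapted to $Y$, shows $N:=M\cap Y$ is saturated in $M$, and then invokes Lemma~\ref{lem1} over $K$; you instead apply the hypothesis once, to the $R$-saturation of $f(A)$ inside $B$, and use the unit element --- effectively inlining the unit-element trick from the proof of Lemma~\ref{lem1}. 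Likewise for~(2) the paper reduces to the residue fields $k(\mathfrak m)$ and appeals to \cite[Thm~4.8]{Mats}, while you apply the hypothesis directly to $f(A)\subseteq B$ and again use $1_B$. For closure under saturated submodules of arbitrary modules the paper pulls back along a finite free $B$-cover $N'\to N$, whereas you quotient by the torsion submodule $T(M)$, observing that $T(M)$ is $B$-stable and contained in $N$; for fullness on torsion-free objects the paper argues that $\varphi$ is determined by the $B_K$-linear map $\varphi_K$, whereas you use the graph trick. All of your variants are sound (the finiteness of $f(A)$ and of its saturation as objects of $\mod_{\rm f}(A)$ follows from $B$ being $R$-finite over the noetherian $R$), and they are arguably cleaner; what the paper's route buys is that it establishes the categorical hypothesis of Lemma~\ref{lem1} for $\omega_K$ and for each $\omega_{k(\mathfrak m)}$ in full, and the author states explicitly that the base-change mechanism is preferred because it is exploited again later in the paper.
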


\begin{proof}

(1). Assume that $\omega$ has the required 
property. We show that $f$ is dominant, i.e. 
$f_K$ is surjective. It suffices to show that the 
functor $\omega_K:\mod(B_K)\to \mod(A_K)$ 
induced from $f_K$ satisfies the condition of 
Lemma \ref{lem1}. Let $X$ be a finite $B_K$-module and $Y\subset X$ an $A_K$-submodule. 
Consider $X$ as a $B$-module, take a $K$-basis of $X$ such that a part of it is a basis of $Y$ and let $M$ be the $B$-submodule generated by this basis, let $N:=M\cap Y$. Then $N_K=Y$ (as it contains a $K$-basis of $Y$). By the diagram 
below $M/N$ is $R$-torsion-free.
$$\xymatrix{0\ar[r]&N\ar[r]\ar@{^(->}[d]& 
M\ar[r]\ar@{^(->}[d]& M/N\ar[r]\ar@{^(->}
[d]&0\\
0\ar[r]& Y\ar[r]& X\ar[r]&X/Y\ar[r]&0.}$$
Thus $N$ is a saturated submodule of $M$, hence, by assumption, $N$ is stable under $B$, consequently
$Y=N_K$ is stable under $B_K$.

Conversely, assume that $f$ is dominant, i.e., $f_K$ is surjective. Then 
$\omega_K$ is fully faithful and closed under 
taking submodules. Let $M$ be an $R$-torsion-free 
$B$-module, $N\subset M$ be a saturated 
$A$-submodule. Then $M/N$ is also 
$R$-torsion-free, hence $N=M\cap N_K$. Now 
$N_K$ is stable under $B_K$ and $M$ is 
stable under $B$, showing that $N$ is stable 
under $B$.

 Let 
$\varphi:M\to N$ be an $A$-linear map, where 
$M,N$ are 
both $R$-torsion-free then $\varphi$ is 
determined by $\varphi_K:M_K\to N_K$. Since 
$f_K$ is surjective, we know that $\varphi_K$ is 
$A_K$-linear, hence also $B_K$-linear, implying that $f$ is $B$-linear. Thus $\omega$ restricted to $R$-torsion free modules is full.

Let now $M$ be a finite $A$-module, $N$ be a finite $B$-module and $\varphi:M\to N$ be an injective $A$-linear map with $M/\varphi(N)$ being $R$-torsion free. Consider a finite free 
$B$-linear cover $\psi:N'\to N$. Let 
$\varphi':M'\to N'$ be the pull-back of 
$\varphi$ along $\psi$ (as $A$-modules). 
$$\xymatrix{M'\ar@{->>}[d]_{\psi'}\ar@{^(->}[r]^{\varphi'}&N'\ar@{->>}[d]^\psi\\ M\ar@{^(->}[r]_\varphi &N.}$$
Then $\varphi'$ is 
injective and $\psi':M'\to M$ is surjective, moreover $N'/\varphi'(M')\cong N/\varphi(N)$ hence is $R$-torsion free.
Consequently, $M'$ is $B$-stable and, since $\varphi\psi'=\psi\varphi'$ is $B$-linear, there is a $B$-action on $M$ making $\varphi$ $B$-linear.

(2). According to the proof of Lemma \ref{lem4}, 
it suffices to show that for any maximal ideal 
$\mathfrak m$ of $R$, the image of 
$\omega_{k(\mathfrak m)}$ is closed under 
taking submodules. Let $V$ be a 
$B_{k(\mathfrak m)}$-module and let 
$\varphi:U\to V$ be an inclusion of 
$A_{k(\mathfrak m)}$-modules. Represent $V$ 
as a quotient of some (free) $B$-modules $M$, 
then $U$ will be a quotient of some $A$-submodule $N$ of $M$. We have $N$ is $R$-torsion-free, hence, by 
assumption, $N$ is stable under $B$,  so that
$U$ must be $B_{k(\mathfrak m)}$-stable.
\end{proof}

\subsection{The case of coalgebras}
In this subsection we consider $R$-flat 
coalgebras. For such coalgebras the comodule 
categories are abelian, see, e.g. \cite[Lem.~3.3.3]{Ha} or \cite[Thm.~3.13]{BW}. Moreover, the kernel and image of a comodule  homomorphism are the same as those of the underlying $R$-module homomorphism. A homomorphism of flat $R$-coalgebras $f:C\to D$ induces the restriction functor $\omega:\comod(C)\to \comod(D)$ which is the identity functor on the underlying $R$-module. Hence $\omega$ is faithful and exact.

We say that a homomorphism of 
flat $R$-coalgebras $f:C\to D$ is pure if it is  a pure homomorphism of 
$R$-modules. This 
condition is the same as requiring $D/f(C)$ be $R$-flat.  Note 
also that over a noetherian domain, finite flat 
modules are projective.

For the case $C$ and $D$ are $R$-projective and $C$ is $R$-finite, the desired results can be deduced from the previous subsection by means of the following lemma.
\begin{lem}\label{lem6} Let $C$ be an $R$-finite 
flat (hence projective) module and $D$ be an 
$R$-projective module. Then:
\begin{enumerate}\item $f$ is injective 
iff $f^\vee:D^\vee\to C^\vee$ is dominant;
\item $f$ is injective and pure iff 
$f^\vee:D^\vee\to C^\vee$ is surjective, 
\end{enumerate}
where $C^\vee:={\rm Hom}_R(C,R)$.
\end{lem}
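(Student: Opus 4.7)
\emph{Proof plan.} For part (1), the nontrivial implication is $f$ injective $\Rightarrow f^\vee$ dominant, which I would establish by reducing to finite-free modules. Using that $D$ is projective, fix an embedding $j\colon D\hookrightarrow R^{(I)}$ into a free module. Since $C$ is finitely generated, the image $jf(C)$ has support in some finite subset $S\subset I$, so $jf$ factors as $C\xrightarrow{\bar f}R^S\hookrightarrow R^{(I)}$ with $\bar f$ injective. Given any $\phi\in C^\vee$, the restriction map $\bar f^\vee\colon R^S=(R^S)^\vee\to C^\vee$ becomes surjective after $\otimes_RK$, because over the field $K$ dualizing the injection $\bar f_K\colon C_K\to K^S$ yields a surjection. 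Thus $n\phi=\psi'\circ\bar f$ for some nonzero $n\in R$ and $\psi'\in R^S$. Extending $\psi'$ by zero to $\widetilde\psi'\colon R^{(I)}\to R$ and restricting along $j$ gives $\psi:=\widetilde\psi'\circ j\in D^\vee$ with $\psi\circ f=n\phi$. Hence ${\rm coker}(f^\vee)$ is $R$-torsion, i.e.\ $f^\vee$ is dominant.

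For the converse in (1), since $C$ is finite projective there is a canonical isomorphism $(C^\vee)_K\cong(C_K)^\vee$, and $(f^\vee)_K$ factors as the canonical injection $(D^\vee)_K\hookrightarrow(D_K)^\vee$ followed by $(f_K)^\vee$. Surjectivity of $(f^\vee)_K$ forces $(f_K)^\vee$ surjective, so $f_K$ is injective over the field $K$; as $C$ is $R$-torsion-free, $f$ itself is injective.

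For part (2), the forward direction invokes the standard fact that a pure monomorphism with finitely presented source splits: since $C$ is finite projective, $f$ admits an $R$-linear retraction, so $D\cong C\oplus E$ and $f^\vee$ is identified with the projection $C^\vee\oplus E^\vee\twoheadrightarrow C^\vee$. Conversely, $C^\vee$ is finite projective, so the surjection $f^\vee$ splits by a section $s\colon C^\vee\to D^\vee$. Dualizing $f^\vee\circ s={\rm id}$ gives $s^\vee\circ f^{\vee\vee}={\rm id}$; pre-composing with the canonical injection $D\hookrightarrow D^{\vee\vee}$ (which exists for $D$ projective) and using $C^{\vee\vee}\cong C$ produces an $R$-linear retraction $r\colon D\to C$ of $f$. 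Hence $f$ splits; then $D/f(C)$ is a direct summand of the projective module $D$, so projective and in particular flat, giving purity.

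I expect the main obstacle to be the forward direction of (1): because $D$ is not assumed finite, $(D^\vee)_K$ is typically a proper subspace of $(D_K)^\vee$, so one cannot simply dualize at the $K$-level and clear denominators. The finite-support reduction to $R^S$ sidesteps this by moving the problem into finitely generated free modules, where duality and base change behave transparently.
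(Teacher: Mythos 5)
Your part (1) and the converse direction of part (2) are correct and, in substance, follow the same route as the paper: embed $D$ into a free module, use that the finitely generated image $f(C)$ lands in a finite free direct summand $R^S$ to reduce to the finite free case, and then settle (1) by dualizing over the quotient field $K$ and the converse of (2) by splitting the surjection onto the finite projective module $C^\vee$ and double-dualizing. Your write-up of (1) is in fact more detailed than the paper's (which disposes of the finite case with ``it involves only the generic fibers'').

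The gap is in the forward direction of (2). The ``standard fact'' you invoke --- that a pure monomorphism with finitely presented \emph{source} splits --- is false; the correct statement concerns a finitely presented \emph{cokernel}. A counterexample over $R=\mathbb{Z}$: the inclusion $\mathbb{Z}\hookrightarrow\widehat{\mathbb{Z}}=\prod_p\mathbb{Z}_p$ is pure (the quotient is torsion-free, hence flat over a PID), the source is free of rank one, yet ${\rm Hom}(\widehat{\mathbb{Z}},\mathbb{Z})=0$ (each factor $\mathbb{Z}_p$ is $q$-divisible for $q\neq p$, so must map to $0$, and $\prod_p\mathbb{Z}_p/\bigoplus_p\mathbb{Z}_p$ is divisible), so no retraction exists. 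Your conclusion does hold in the situation of the lemma, but only because $D$ is projective, which you never use at this point. The repair is precisely the reduction you already set up in part (1) and which the paper performs: choose the embedding $j\colon D\hookrightarrow R^{(I)}$ to be split (possible since $D$ is projective), so that $C\to R^S$ is still pure ($R^S$ being a direct summand of $R^{(I)}$); then $R^S/f(C)$ is finite and flat over the noetherian domain $R$, hence projective, so $0\to C\to R^S\to R^S/f(C)\to 0$ splits, and composing the resulting retraction $R^S\to C$ with the projection $R^{(I)}\to R^S$ and restricting along $j$ yields the desired retraction $D\to C$. With that substitution the proof is complete.
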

\begin{proof}
Embedding $D$ as a direct summand into a free 
module does not 
change the properties of $f$ and $f^\vee$, 
hence we can assume that $D$ is free. Since $C$ 
is finite, there exists a finite direct summand of 
$D$ which contains the image of $f$ and we can 
replace $D$ by this summand, that means 
we can assume that $D$ is finite. 
The claims for finite projective modules are obvious. 
For (1), it involves only the generic fibers. For (2),
$f:C\to D$ is pure iff $D/f(C)$ projective,  and iff the 
sequence $0\to C\to D\to D/f(C)\to 0$ is split exact, iff 
the sequence 
$0\to (D/f(C))^\vee\to D^\vee\to C^\vee\to 0$ is split
exact. 
\end{proof}


A subcomodule $N$ of a $C$-comodule $M$ said to be saturated if $M/N$ is $R$-torsion free.
\begin{prop}\label{prop9}
Let $C,D$ be $R$-projective coalgebras. Let 
$f:C\to D$ be a homomorphism of $R$-coalgebras and $\omega:\comod_{\rm f}(C)\to\comod_{\rm f}(D)$ be the induced functor. 
Assume that $C$ is $R$-finite.
Then:
\begin{enumerate}
\item The image of functor $\omega$ is closed 
under taking saturated subcomodules of $R$-
torsion-free comodules iff $f$ is injective. In this case $\omega$ is also closed under taking saturated subcomodules of any comodules and its restriction to the subcategory of $R$-torsion-free comodules is full.

\item  
The image of functor $\omega$ is closed 
under taking subcomodules of $R$-torsion-free 
comodules iff $f$ is injective and pure. (In 
this case $\omega$ is also obviously full.)
\end{enumerate}
\end{prop}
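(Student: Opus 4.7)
The strategy is to dualize and reduce to Proposition~\ref{prop5} via Lemma~\ref{lem6}. Setting $A := D^\vee$ and $B := C^\vee$, the dual map $f^\vee : A \to B$ is a homomorphism of $R$-torsion-free algebras (duals of $R$-projective modules are $R$-torsion-free) with $B$ being $R$-finite projective, so Proposition~\ref{prop5} applies to $f^\vee$.

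First I would establish the translation between the comodule and module pictures. Since $C$ is $R$-finite projective, the standard coalgebra/algebra duality yields an equivalence $\comod_{\rm f}(C) \simeq \mod_{\rm f}(C^\vee)$ that identifies subcomodules with submodules and preserves the notions of $R$-torsion-freeness and saturation. For $D$ one has in general only a fully faithful embedding $\comod_{\rm f}(D) \hookrightarrow \mod_{\rm f}(D^\vee)$; however, for any $M \in \comod_{\rm f}(C)$ the $D$-coaction on $\omega(M)$ factors through the $R$-finite subcoalgebra $f(C) \subset D$, and this factorization ensures that $D$-subcomodules of $\omega(M)$ coincide with $D^\vee$-submodules of its underlying $R$-module. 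Under these identifications the functor $\omega$ corresponds precisely to the restriction functor $\mod_{\rm f}(B) \to \mod_{\rm f}(A)$ induced by $f^\vee$, and the notions of ``(saturated) subcomodules of ($R$-torsion-free) comodules'' translate into the analogous notions for modules.

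With this dictionary in hand, both parts follow by chaining Proposition~\ref{prop5} with Lemma~\ref{lem6}. For (1), closure of the image of $\omega$ under saturated subcomodules of $R$-torsion-free comodules translates to the analogous property for the module functor, which by Proposition~\ref{prop5}(1) is equivalent to $f^\vee$ being dominant, hence by Lemma~\ref{lem6}(1) to $f$ being injective; the additional statements (closure under saturated subcomodules of arbitrary comodules, and fullness on $R$-torsion-free comodules) transport directly. For (2), the same translation turns closure under subcomodules of $R$-torsion-free comodules into the corresponding condition for modules, which by Proposition~\ref{prop5}(2) is equivalent to $f^\vee$ being surjective, hence by Lemma~\ref{lem6}(2) to $f$ being injective and pure. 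The main obstacle is the bookkeeping in the non-finite case for $D$: over a general noetherian domain a $D^\vee$-submodule of a $D$-comodule need not be a $D$-subcomodule, and the finiteness of $f(C)$ has to be used carefully to identify the two notions on the image of $\omega$, so that the closure properties on both sides correspond faithfully.
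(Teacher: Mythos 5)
Your proposal follows essentially the same route as the paper: dualize via Lemma~\ref{lem6}, identify $\omega$ with the restriction functor $\mod_{\rm f}(C^\vee)\to\mod_{\rm f}(D^\vee)$, and invoke Proposition~\ref{prop5}. The one place you diverge is the justification that $D^\vee$-submodules of comodules in the image agree with $D$-subcomodules: the paper gets this directly from the $R$-projectivity of $D$ (citing Hashimoto, 3.10, for the embedding $\comod(D)\hookrightarrow\mod(D^\vee)$ being closed under subobjects), whereas your argument through the $R$-finite image $f(C)$ is shakier --- over a general noetherian domain $f(C)$ need not be $R$-projective, nor obviously a subcoalgebra of $D$, so you should lean on the projectivity of $D$ itself as the paper does.
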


\begin{proof}
 Since $D$ is projective, the natural functor 
${\rm comod}(D)\to \mod(D^\vee)$ is fully faithful, exact with image closed under taking subobjects \cite[3.10]{Ha}.
Thus the functor, induced from $\omega$, $\mod(D^\vee)\to \mod(C^\vee)$ is fully faithful, exact and has image closed under taking subobjects iff $\omega$ is. The claim follows from Proposition \ref{prop5} and Lemma \ref{lem6}.
\end{proof}

We say that a flat $R$-coalgebra $C$ is locally finite if 
$C$ is the union of its finite $R$-projective pure
subcoalgebras $C_\alpha$, $\alpha\in A$. 
This property is called IFP (ind-finite projective property) in \cite{Ha}. 
As a corollary of Proposition \ref{prop9}, we have
\begin{cor} Let $C,D$ be projective $R$-coalgebras. 
Assume that $C$ has IFP, $C=\bigcup C_\alpha$. 
Let $f:C\to D$ be a homomorphism of $R$-coalgebras. 
Then $f$ is injective iff the induced functor $\omega:\comod_{\rm f}(C)\to\comod_f(D)$ has image closed under taking subobjects. 
In particular, $\comod_{\rm f}(C)$ is the union of its full 
subcategories $\comod_{\rm f}(C_\alpha)$, which are closed under taking 
subobjects.\end{cor}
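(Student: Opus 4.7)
The plan is to deduce the corollary from Proposition \ref{prop9} by applying it to each restriction $f_\alpha := f|_{C_\alpha}\colon C_\alpha\to D$ and patching via the filtered-union structure $C=\bigcup_\alpha C_\alpha$. The essential preliminary observation is that every $M\in\comod_{\rm f}(C)$ already lies in $\comod_{\rm f}(C_\beta)$ for some $\beta$: since $M$ is finitely generated over the noetherian ring $R$, the coaction $\rho\colon M\to M\otimes_R C$ uses only finitely many elements of $C$, and the filtered system $\{C_\alpha\}$ absorbs these into a single $C_\beta$; so $\rho$ factors through $M\otimes_R C_\beta$, making $M$ a $C_\beta$-comodule. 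This by itself proves $\comod_{\rm f}(C)=\bigcup_\alpha\comod_{\rm f}(C_\alpha)$.

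For the forward direction of the iff, assume $f$ is injective. Given $M\in\comod_{\rm f}(C)$ and a $D$-subcomodule $N\subseteq M$, pick $\alpha$ with $M\in\comod_{\rm f}(C_\alpha)$. Since $C_\alpha$ is $R$-finite projective (by IFP), $D$ is $R$-projective, and $f_\alpha=f\circ i_\alpha$ is injective, Proposition \ref{prop9} applied to $f_\alpha$ shows that $N$ is a $C_\alpha$-subcomodule, hence a $C$-subcomodule. For the backward direction, if $f$ is not injective then $\ker f\cap C_\alpha\neq 0$ for some $\alpha$ (as $C=\bigcup C_\alpha$), so $f_\alpha$ is not injective; Proposition \ref{prop9} then furnishes an $M\in\comod_{\rm f}(C_\alpha)$ together with a $D$-subcomodule $N\subseteq M$ that is not a $C_\alpha$-subcomodule. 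To push this failure up to the $C$-level, I would use the identity
\[
(N\otimes_R C)\,\cap\,(M\otimes_R C_\alpha)=N\otimes_R C_\alpha\qquad\text{inside }M\otimes_R C,
\]
which follows from the $R$-flatness of $C$, $C_\alpha$, and $C/C_\alpha$ (the last being part of IFP). It shows that the $C$-coaction of $M$, which lands in $M\otimes_R C_\alpha$, sends $N$ into $N\otimes_R C$ if and only if it sends $N$ into $N\otimes_R C_\alpha$; hence $N$ is not a $C$-subcomodule either.

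For the ``in particular'' clause, applying the main iff to each inclusion $i_\alpha\colon C_\alpha\hookrightarrow C$, which is injective and (by IFP) pure, shows that $\comod_{\rm f}(C_\alpha)$ is closed under subobjects in $\comod_{\rm f}(C)$; the fullness assertion is the additional conclusion of Proposition \ref{prop9}(2) available because $i_\alpha$ is pure. The main obstacle I anticipate is matching the precise sense of ``closed under subobjects'' in the statement of the corollary with the saturated-vs.-general and torsion-free-vs.-general dichotomies in Proposition \ref{prop9}; the intersection identity above, together with the purity of each $C_\alpha$ in $C$, is what bridges the two descriptions and plays the role the duality argument played in Lemma \ref{lem1}.
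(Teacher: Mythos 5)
Your overall scheme is the one the paper intends: the corollary is stated with no written proof, only the remark that it follows from Proposition \ref{prop9}, and the evident derivation is exactly your reduction $\comod_{\rm f}(C)=\bigcup_\alpha\comod_{\rm f}(C_\alpha)$ (finitely generated over noetherian $R$, directed family, $C/C_\alpha$ flat), followed by an application of Proposition \ref{prop9} to $f_\alpha=f|_{C_\alpha}\colon C_\alpha\to D$ and a transfer between $C_\alpha$-stability and $C$-stability via the purity of $C_\alpha$ in $C$. Your preliminary observation, your backward direction (including the intersection identity $(N\otimes_R C)\cap(M\otimes_R C_\alpha)=N\otimes_R C_\alpha$, which is the right tool there), and your treatment of the ``in particular'' clause (where the inclusions $C_\alpha\hookrightarrow C$ are both injective \emph{and} pure, so Proposition \ref{prop9}(2) genuinely applies) are all sound.

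The forward direction, however, has a real gap --- the very one you flag at the end without closing. Injectivity of $f$ gives only that each $f_\alpha$ is injective, and Proposition \ref{prop9}(1) then yields closure under \emph{saturated} subcomodules only; Proposition \ref{prop9}(2), which handles arbitrary subcomodules, requires $f_\alpha$ to be \emph{pure}, and purity of $C_\alpha$ in $C$ does not make $f_\alpha=f\circ i_\alpha$ pure as a map into $D$. Your intersection identity lives inside $M\otimes_R C$ and only translates between $C$- and $C_\alpha$-stability; the containment you actually need, namely $\delta(N)\subseteq (N\otimes_R D)\cap(M\otimes_R C_\alpha)\subseteq N\otimes_R C_\alpha$ inside $M\otimes_R D$, is precisely where purity of $C_\alpha$ in $D$ enters, and it can fail. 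Concretely, let $D=R[T]$ with $T$ primitive, $\pi$ a nonzero nonunit, and $C=R\cdot 1\oplus R\cdot\pi T$; then $C$ is a finite free subcoalgebra (so it trivially has IFP) and $f$ is the inclusion, hence injective, but the $C$-comodule $M=R^2$ with square-zero operator $b(e_2)=e_1$, $b(e_1)=0$ has the submodule $N=R\pi e_1+Re_2$, which is stable under $\pi b$ (a $D$-subcomodule) but not under $b$ (not a $C$-subcomodule, and not in the image of $\omega$). Note that $N$ is not saturated in $M$. So the implication ``$f$ injective $\Rightarrow$ image closed under all subobjects'' cannot be proved along your lines; the statement must be read with \emph{saturated} subobjects, matching Proposition \ref{prop9}(1) and Proposition \ref{prop9b}(1), and with that reading your argument closes up immediately: for $N$ saturated in $M\in\comod_{\rm f}(C_\alpha)$, Proposition \ref{prop9}(1) makes $N$ a $C_\alpha$-subcomodule, hence a $C$-subcomodule.
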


Notice that 
 there exist $R$-flat coalgebras which contains almost no finite pure subcoalgebras, as shown in the examples below.

\begin{ex}[{\cite{dSantos}}]\label{ex2.5}
\rm
  (1) Assume that $R$ is a Dedekind ring with characteristic equal to 2 and $\pi\neq 0$ is a non-unit in $R$. Consider the algebra $H:=R[T]/(\pi T^2+T)$ with the coalgebra structure given by
$$\Delta(T)=T\otimes 1+1\otimes T.$$  
Then $H$ is torsion-free, hence flat over $R$.
Let $M\subset H$ be a saturated $R$-finite subcomodule of $H$. Then $M$ is free over $R$, more over its rank is at most 2, as $H_K$ has dimension 2 over $K$. If $M$ has rank 2 then $H/M$ is $R$-torsion, contradiction, hence $M$ has rank 1 over $R$. The coaction of $H$ on $M$ is thus given by a group-like element in $H$. On the other hand, the $R$-flatness of $H$ implies $H\otimes H$ is a submodule of $H_K\otimes_KH_K$, hence the coproduct on $H$ is the restriction of the coproduct on $H_K$:
$$\xymatrix{H\ar[r]^{\Delta\quad}\ar[d]& H\otimes H\ar[d]\\
H_K\ar[r]_{\Delta\qquad}& H_K\otimes_KH_K.}$$
Consequently, a group-like element of $H$ is a group-like element of $H_K$. But in $H_K$ the unique group-like element is 1. Thus $M$ is a trivial comodule of $H$.
  
(2) Similarly, assume that $R$ is a Dedekind ring, in which 2 is invertible. Consider the algebra $H:=R[T]/(\pi T^2+2T)$ where $\pi\neq 0$ is a non-unit. $H$ is a Hopf algebra with the coaction given by 
$$\Delta(T)=T\otimes 1+1\otimes T+\pi T\otimes T.$$
Similar discussion shows that there are only two finite saturated subcomodules of $H$, the one generated by 1 and the other generated by the group-like element $1+\pi T$.
\end{ex}

To treat a general coalgebra homomorphism $f:C\to D$ we shall imitate the proof of Lemma \ref{lem1}. Our condition on $\omega$ will be some what stronger.

\begin{prop}\label{prop9b}
Let $C,D$ be $R$-flat coalgebras  and  
$f:C\to D$ be a homomorphism of $R$-coalgebras. 
Let $\omega:\comod(C)\to\comod(D)$ be the induced functor on comodule categories.
Then:
\begin{enumerate}
\item The image of functor $\omega$ is closed 
under taking saturated subcomodules of $R$-torsion-free comodules iff $f$ is injective. In this case $\omega$ is also closed under taking saturated subcomodules of any comodules and its restriction to the subcategory of $R$-torsion-free comodules is full.

\item 
The image of functor $\omega$ is closed 
under taking subcomodules of any 
comodules iff $f$ is injective and pure. In 
this case $\omega$ is full.
\end{enumerate}
\end{prop}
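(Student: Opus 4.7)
My plan is to handle the ``only if'' halves of both parts uniformly through a cofree-comodule construction followed by a counit argument, and to handle the ``if'' halves by base change to the quotient field $K$ together with a routine diagram chase.

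For the ``if'' direction of (1), assuming $f$ is injective, $f_K:C_K\hookrightarrow D_K$ is also injective by $R$-flatness of $C,D$ and exactness of localization. For a saturated $D$-subcomodule $N$ of an $R$-torsion-free $C$-comodule $M$, I would work inside $M_K\otimes_K D_K$ and, via a $K$-basis argument, observe $(1\otimes f_K)^{-1}(N_K\otimes D_K)=N_K\otimes C_K$, so $\rho_{M_K}(N_K)\subset N_K\otimes C_K$. Descending to $R$ uses the two injections $M\otimes C\hookrightarrow M_K\otimes C_K$ and $(M/N)\otimes C\hookrightarrow (M_K/N_K)\otimes C_K$, the latter relying on the saturation of $N$ (so that $M/N$ is $R$-torsion-free) and on the $R$-flatness of $C$. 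The same setup yields fullness on the $R$-torsion-free subcategory, and the extension to arbitrary $M$ is obtained by noting that the $R$-torsion submodule $M_t$ is a $C$-subcomodule (since $(M\otimes C)[r]=M[r]\otimes C$ by flatness of $C$), so one may replace $M$ by $M/M_t$. The ``if'' direction of (2) is softer: purity gives $(1\otimes f):(M/N)\otimes C\hookrightarrow (M/N)\otimes D$, and then $\rho_M(N)\subset N\otimes C$ follows by a short diagram chase on $0\to N\to M\to M/N\to 0$ tensored with $0\to C\to D$.

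For the common ``only if'' implication, which I view as the main point, I would use the following uniform construction. For each $R$-module $T$, form the cofree $C$-comodule $M_T:=T\otimes_R C$ with coaction $1_T\otimes\Delta_C$ and set $L_T:=\ker(1_T\otimes f:T\otimes C\to T\otimes D)$. That $L_T$ is a $D$-subcomodule of $\omega(M_T)$ follows from $(1\otimes f\otimes 1)\rho^D(L_T)=(1\otimes\Delta_D)(1\otimes f)(L_T)=0$, together with $\ker(1\otimes f\otimes 1)=L_T\otimes D$ (by $R$-flatness of $D$). For $T=R$ we have $L_T=\ker f\subset C$, and $C/L_T\cong f(C)\subset D$ is $R$-torsion-free, so $L_T$ is a saturated $D$-subcomodule of the $R$-torsion-free comodule $C$ and the hypothesis of (1) applies; for arbitrary $T$ the hypothesis of (2) applies. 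In either case $L_T$ is a $C$-subcomodule, i.e.\ $(1\otimes\Delta_C)(L_T)\subset L_T\otimes C$. Applying $1\otimes\epsilon_C\otimes 1$ and using $(\epsilon_C\otimes 1)\Delta_C=1_C$ produces $L_T\subset (1\otimes\epsilon_C)(L_T)\otimes C$; but $\epsilon_C=\epsilon_D\circ f$ forces $(1\otimes\epsilon_C)(L_T)=(1\otimes\epsilon_D)(1\otimes f)(L_T)=0$, whence $L_T=0$. Taking $T=R$ gives $\ker f=0$ (proving the injectivity needed for (1)), and taking $T$ arbitrary gives $1\otimes f$ injective for every $T$, i.e.\ $f$ is pure (proving (2)).

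The step I expect to be the main obstacle, and the reason for the cofree construction, is the ``only if'' of (2). A naive reduction to residue fields via Lemma~\ref{lem2} only shows that $f_{k(\mathfrak m)}$ is injective for every maximal ideal $\mathfrak m$, equivalently $\mathrm{Tor}_1^R(R/\mathfrak m,D/f(C))=0$ for every such $\mathfrak m$, which in the absence of a finite-presentation hypothesis on $D/f(C)$ does not promote to flatness of $D/f(C)$. The cofree-plus-counit argument bypasses this entirely by producing the obstruction $L_T$ functorially in $T$.
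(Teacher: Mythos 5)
Your proposal is correct and follows essentially the same route as the paper: the ``if'' directions by base change to $K$ (plus the torsion-submodule reduction and the purity intersection $N\otimes D\cap M\otimes C=N\otimes C$), and the ``only if'' directions by exhibiting $\ker(1_T\otimes f)$ as a $D$-subcomodule of the $C$-comodule $T\otimes C$ and killing it with the counit identity $\varepsilon_C=\varepsilon_D\circ f$. The only cosmetic difference is that you run this construction uniformly over all $T$, whereas the paper takes $T=R$ for part (1) and $T=R/I$ for part (2), the latter sufficing because purity of $f$ (equivalently flatness of $D/f(C)$, given $D$ flat) can be tested on the modules $R/I$.
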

\begin{proof}
(1) Assume that $\omega$ has the required property. Let $C_0:={\rm ker}(f)$. 
Then $C/C_0\cong {\rm im}(f)\subset D$ is $R$-torsion-free, hence $C_0$ is a saturated subcomodule of $C$, considered as $D$-comodules (the big left square with curved vertical arrows below is commutative). 
By assumption, the coaction of $D$ on $C_0$ lifts to a coaction of $C$. That is, there exists a coaction $C_0\to C_0\otimes C$ (the dotted arrow below) making the following diagram commutative:
$$\xymatrix{
C_0\ar[r] \ar@{-->}[d]\ar@/_30pt/[dd]
&C\ar[d]^\Delta\ar[r]^f\ar@/^30pt/[dd]
& D\ar[dd]^\Delta \\
C_0\otimes C\ar[r]\ar[d]^{{\rm id}\otimes f}& C\otimes C\ar[d]^{{\rm id}\otimes f}&\\
C_0\otimes D\ar[r]&C\otimes D\ar[r]_{f\otimes {\rm id}}&D\otimes D.}$$
In particular, the coaction of $C$ on $C_0$ is the restriction of that on $C$ (the upper-left square).
That is, for any $c\in C_0$ we have a representation
$$\Delta(c)=\sum_{(c)}c_{(1)}\otimes c_{(2)},$$
with $c_{(1)}\in C_0$. On the other hand, as $f$ is a coalgebra homomorphism, we have $\varepsilon\circ f=\varepsilon$. Consequently, $\varepsilon(C_0)=0$. Applying $\varepsilon\otimes{\rm id}$ to the above equation we get
$$c=\sum_{(c)}\varepsilon(c_{(1)})\otimes c_{(2)}=0.$$
A contradiction. Thus ${\rm ker}(f)=0$.

Conversely, assume that $f:C\to D$ is injective, then the map $f_K:C_K\to D_K$ is also injective, as the base change $R\to K$ is flat. Hence, according to \ref{lem2}, \ref{rmk3}, $\omega_K$ is fully faithful and is closed under taking subcomodules. Thus $\omega$ is full when restricted to $R$-torsion free comodules. 

Finally we show that the image of $\omega$ is closed under taking saturated subcomodules of any comodules. For an $R$-module $M$, let $M^\text{tor}$ denote its torsion part, i.e. those elements of $M$ killed by some non-zero element of $R$. Then we have
$$M^\text{tor}={\rm ker}(M\to M\otimes K).$$
Therefore, for any flat $R$-module $P$ we have
$$(M^\text{tor}\otimes P)\cong (M\otimes P)^\text{tor}.$$
Since any $R$-linear map preserves the torsion part, we conclude that, if $M$ is a $C$-comodule then $M^\text{tor}$ is a subcomodule. 
Let now $N\subset M$ be a subcomodule with respect to the action of $D$. 
If $M/N$ is $R$-torsion free then $M^\text{tor}=N^\text{tor}$ and hence is stable under the coaction of $C$. 
Hence we can consider the saturated inclusion $N/N^\text{tor}\subset M/M^\text{tor}$, which by assumption shows that $N/N^\text{tor}$ is stable under the coaction of $C$. 
As $C$ is flat, we conclude that $N$ itself is stable under $C$.

(2) Assume that $\omega$ is closed under taking subcomodules of any comodules. Then according to (1), $f$ is injective. Assume $f$ is not pure, then there exists an ideal $I$ of $R$ such that the induced map $R/I\otimes C\to R/I\otimes D$ is not injective. 
Let $C_0$ be the kernel of this map. 
Repeat the argument of the proof of (1) we conclude that $C_0$ is stable under the coaction of $D$ but not under the coaction of $C$, a contradiction. Thus $f$ has to be pure.

For the converse, assuming that $f:C\to D$ is injective and pure and $N\subset M$ be $R$-modules, then we have the equality of submodules of $M\otimes D$:
$$N\otimes D\cap M\otimes C=N\otimes C,$$
where $C$ is considered as an $R$-submodule of $D$ by means of $f$. Hence, if $M$ is a $C$-comodule and $N$ is a $D$-subcomodule of $M$, then, denote by $\delta$ the coaction, we have
$$\delta(N)\subset N\otimes D\cap M\otimes C=N\otimes C.$$
That is, $N$ is stable under the coaction of $C$.
\end{proof}

\begin{rmk}
According to Serre \cite[Prop.~2]{Se}, any object in $\comod(C)$ is the union of its $R$-finite subcomodules (but generally not saturated). I don't know if one can prove Propsition \ref{prop9b} with $\comod(C)$, $\comod(D)$ replaced by $\comod_f(C)$, $\comod_f(D)$, respectively.
\end{rmk}

\section*{Acknowledgment}
The author would like to thank Dr. Nguyen Chu Gia Vuong and Nguyen Dai Duong for stimulating discussions and VIASM for providing a very nice working space and the financial support.
A special thank goes to the anonymous referee for his careful and throughout reading. His comments and remarks have greatly improved the work.

\end{document}